\documentclass[11pt]{article}
\usepackage[a4paper,margin=1in]{geometry}
\usepackage[numbers]{natbib}
\usepackage{amsmath,amssymb,amsthm,amsfonts}
\usepackage{hyperref}
\hypersetup{hidelinks}
\usepackage{booktabs}
\usepackage{enumitem}
\newtheorem{theorem}{Theorem}[section]
\newtheorem{definition}[theorem]{Definition}
\newtheorem{lemma}[theorem]{Lemma}

\newtheorem{assumption}[theorem]{Assumption}

\newtheorem{proposition}[theorem]{Proposition}

\newcommand{\R}{\mathbb{R}}
\newcommand{\Sym}{\mathbb{S}}
\newcommand{\trace}{\mathrm{tr}}
\newcommand{\diag}{\mathrm{diag}}
\newcommand{\Diag}{\mathrm{Diag}}
\newcommand{\st}{\mathrm{s.t.}}
\newcommand{\rank}{\mathrm{rank}}

\title{Exact low-dimensional reformulations for regularized spectral approximation}

\author{
Shengxiang Deng\thanks{School of Data Science, Fudan University, Shanghai, China.}
\and
Xudong Li\thanks{School of Data Science, Fudan University, Shanghai, China. Email: \texttt{lixudong@fudan.edu.cn}.}
\and
Yangjing Zhang\thanks{State Key Laboratory of Mathematical Sciences, Academy of Mathematics and Systems Science, Chinese Academy of Sciences, Beijing, China. Email: \texttt{yangjing.zhang@amss.ac.cn}.}
}
\date{August 27, 2026}

\begin{document}
\maketitle

\begin{abstract}
We study exact low-dimensional reformulations for a regularized spectral approximation problem and its weighted variant. In the unweighted setting, weak-majorization monotonicity of the fidelity term enables an exact reformulation over singular values. In the weighted setting, we establish such a reformulation under compatibility and simultaneous diagonalization conditions. We further show, via a counterexample, that this exact reformulation can fail in the absence of simultaneous diagonalization.
\end{abstract}

\noindent\textbf{Keywords:} Regularized spectral approximation; Spectral regularization; Unitarily invariant norms; Simultaneous diagonalization; Low-dimensional reformulation

\section{Introduction}

	Regularized spectral approximation problems arise when one seeks a matrix $X$ that balances data fidelity and spectral regularization, possibly under additional spectral constraints. Typical models take the form of a data-fitting term involving $X-A$ or $A-BXC$ with $A$ being a given data matrix and $B$, $C$ being given weight matrices, together with a spectral regularizer on $X$ and, in some cases, a spectral constraint. Such formulations appear throughout matrix approximation \cite{eckart1936approximation,gavish2014optimal}, inverse problems and low-complexity modeling \cite{fan2019factor,deng2025alternating}, where a critical step is whether the original matrix optimization problem can be reduced exactly to a lower-dimensional problem corresponding to eigenvalues or singular values.

	A natural starting point is the Eckart--Young--Mirsky theorem \cite{eckart1936approximation,mirsky1960symmetric}, which gives a complete singular-value reformulation of the best low-rank approximation problem under unitarily invariant norms. However, this classical reduction is tied to a highly special setting. In many modern regularized approximation models, the loss term is no longer a unitarily invariant norm, the loss itself and the regularizer may be nonconvex, and weighted forms such as $A-BXC$ destroy the underlying unitary invariance. In these situations, it is no longer clear whether an exact eigenvalue or singular-value reformulation survives, or whether optimal solutions retain any explicit structure.

	This paper studies exact low-dimensional reformulations for a regularized spectral approximation problem and its weighted variant. Rather than focusing on algorithm design, we aim to identify structural conditions under which the matrix problem admits an equivalent vector formulation. In this sense, our goal is to clarify when regularized spectral approximation remains exactly reducible despite the presence of general spectral losses, regularizers, and weight matrices.

We start with the regularized spectral approximation problem
\begin{equation} \label{eq:intro_prob1}
	\min_{X \in \R^{m \times n}} F_1(X-A) + \mu F_2(X) \quad \st \quad G(X) \leq c,
\end{equation}
where $A \in \R^{m \times n}$ and $c\in \R$ are given, $F_1$, $F_2$, and $G$ are spectral functions, and $\mu \ge 0$ is a regularization parameter. Here $F_1(X-A)$ serves as the fidelity term and $F_2(X)$ as the regularizer. Model \eqref{eq:intro_prob1} subsumes a variety of problems arising in multivariate regression, subspace learning, and matrix recovery; see, e.g., \cite{yuan2007dimension,liu2013lrr,lu2014generalized,fan2019factor,bertsimas2023sparse}.

Existing exact reformulation results \cite{mirsky1960symmetric,yu2012rank,deng2025alternating} for \eqref{eq:intro_prob1} typically require both $F_1$ and $F_2$ to be unitarily invariant norms and restrict $G$ to the rank function, i.e., $G(X)=\rank(X)$. In contrast, we require only that $F_1$, $F_2$, and $G$ are spectral functions. We further assume that the absolutely symmetric function $f_1$ associated with the fidelity term $F_1$ is monotone under weak majorization; see Assumption~\ref{ass:monotonicity}. This condition is satisfied by all unitarily invariant norms, their non-decreasing compositions, and certain nonconvex spectral functions. Under this assumption, we establish an exact low-dimensional reduction of \eqref{eq:intro_prob1}; see \eqref{prob:vector}.

	We next consider the weighted regularized spectral approximation problem
\begin{equation*}
	\min_{X \in \R^{m \times n}} \ell(\|A - BXC\|_{\rm UI}) + \mu F(X) \quad \st \quad G(X) \leq c,
\end{equation*}
where $A$, $B$, and $C$ are given matrices of compatible dimensions, $\|\cdot\|_{\rm UI}$ denotes a unitarily invariant norm, $F$ and $G$ are spectral functions, $\ell:\R_+\to(-\infty,\infty]$ is proper and non-decreasing, and $\mu\ge0$. The weight matrices $B$ and $C$ destroy the unitary invariance present in the unweighted setting, making the problem substantially more difficult.

In the rank-constrained Frobenius-loss setting, exact reduction results were established by Sondermann \cite{sondermann1986best} and later rediscovered by Friedland and Torokhti \cite{friedland2007generalized}. Related one-sided and fixed-subspace variants were studied in \cite{golub1987generalization}. Beyond the Frobenius setting, Yu and Schuurmans \cite{yu2012rank} studied weighted problems involving general unitarily invariant norms under simultaneous diagonalization assumptions. We extend this line of work from rank-constrained norm-based models to a broader regularized spectral framework. Under Assumption~\ref{ass:weighted_setup}, especially conditions (iii)--(v), we prove that whenever the weighted problem admits an optimal solution, it also admits an optimal solution with a diagonal core in the singular-vector coordinates, and hence an exact low-dimensional reformulation; see \eqref{prob2:vector}.

	\section{Preliminaries and Definitions}

	In this section, we introduce the necessary notation and definitions that will be used. Let $\R^{m \times n}$ be the space of $m \times n$ real matrices.  We write $\R^m$ for the $m$-dimensional Euclidean space and $\R^m_{++}$ for its strictly positive orthant. We denote by $\Sym^m$ the space of real symmetric $m\times m$ matrices and by $\Sym_{++}^m$ the cone of symmetric positive definite matrices.
	We use $\lambda(X) \in \R^m$ to denote the vector of eigenvalues of $X\in\Sym^m$, arranged in non-increasing order.
	Let $q = \min\{m, n\}$.
	The vector of singular values of $X\in\R^{m \times n}$ is denoted by $\sigma(X)\in \R^q$ with the components being arranged in non-increasing order $\sigma_1(X) \geq \dots \geq \sigma_q(X) \geq 0$. For $x \in \R^q$, we denote by $x_{[1]} \ge \dots \ge x_{[q]}$ the components of $x$ arranged in non-increasing order.
	A proper function is a function that never takes the value $-\infty$ and is not identically $+\infty$. The characteristic function $\mathbb{I}(\mathcal{E})$ equals $1$ when the event $\mathcal{E}$ is true and $0$ otherwise. For a vector $x$, $\Diag(x)$ denotes the generalized diagonal matrix with diagonal vector $x$; its dimensions are determined by context and need not be square. For a matrix $X\in \R^{m \times n}$, $\diag(X)\in\R^q$ denotes the vector of diagonal entries of $X$. We write $\trace(X)$ for the trace of a matrix. For matrices, $\|\cdot\|_2$, $\|\cdot\|_F$, and $\|\cdot\|_{S_p}$ denote the spectral, Frobenius, and Schatten-$p$ (quasi-)norms, respectively. For a matrix $U$ with orthonormal columns, we use $U^\perp$ to denote its orthogonal complement, so that $[\,U\ \ U^\perp\,]$ is orthogonal.

	We first recall the definitions of spectral functions and unitarily invariant norms.

	\begin{definition}[Symmetric and Absolutely Symmetric Function]
		A proper function $f : \R^q \to (-\infty, \infty]$ is said to be symmetric if $f(Px) = f(x)$ for every permutation matrix $P$, and absolutely symmetric if $f(PJx) = f(x)$ for every permutation matrix $P$ and every diagonal sign matrix $J$ with $J_{ii}\in\{\pm1\}$.
	\end{definition}

	\begin{definition}[Spectral Function \cite{lewis1996derivatives}]
		A proper function $F$ is called a spectral function if either there exists an absolutely symmetric function $f : \R^q \to (-\infty, \infty]$ such that
		\[
		F(X) = f(\sigma(X)), \quad \forall X \in \R^{m \times n},
		\]
		or there exists a symmetric function $f : \R^m \to (-\infty, \infty]$ such that
		\[ F(X) = f(\lambda(X)), \quad \forall X \in \Sym^m. \]
	\end{definition}

	Spectral functions are inherently unitarily invariant, since they depend only on singular values (or eigenvalues in the symmetric case). Table~\ref{tab:spectral_functions} lists several standard examples and their associated absolutely symmetric vector functions. In this paper, we focus on spectral functions defined through singular values. Analogous results can also be established for eigenvalue-based spectral functions.

	\begin{table}[h]
		\centering
		\caption{Correspondence between absolutely symmetric functions and spectral functions. ${\rm dom}(f)=\R^{q}$, ${\rm dom}(F)=\R^{m\times n}$.
		}
		\label{tab:spectral_functions}
		\renewcommand{\arraystretch}{1.5}
		\begin{tabular}{lc}
			\toprule
			$f(x)$ &   $F(X)=f(\sigma(X))$\\
			\midrule
			$\sum_{i=1}^{q} |x_{i}|$  & $\|X\|_{*}$  \\
			$\max_{i} |x_{i}|$  & $\|X\|_{2}$  \\
			$(\sum_{i=1}^{q} |x_{i}|^2)^{1/2}$  & $\|X\|_{F}$  \\
			$\sum_{i=1}^{k} |x|_{[i]}$  & $\|X\|_{(k)}$  \\
			$(\sum_{i=1}^{q} |x_{i}|^p)^{1/p}, \, p > 0$  & $\|X\|_{S_p}$  \\
			$\sum_{i=1}^{q} \mathbb{I}(x_i \neq 0)$  & $\rank(X)$  \\
			\bottomrule
		\end{tabular}
	\end{table}

	Unitarily invariant norms are particularly important for the weighted model, so we also recall the corresponding definitions.

	\begin{definition}[Unitarily Invariant Norm]
		A matrix norm $\|\cdot\|$ on $\R^{m \times n}$ is called a unitarily invariant norm if
		\[
		\|U A V^T\| = \|A\|
		\]
		for all $A \in \R^{m \times n}$ and all orthogonal matrices $U \in \R^{m \times m}, V \in \R^{n \times n}$.
	\end{definition}

	Common examples of unitarily invariant norms include the spectral norm, the Frobenius norm, and the nuclear norm. The following theorem records the standard correspondence between unitarily invariant norms and symmetric gauge functions\footnote{A function $\phi:\R^q\to\R_+$ is called a symmetric gauge function if it is absolutely symmetric and $\phi$ is a norm on $\R^q$.} of singular values.
	\begin{theorem}[{\cite[Theorem~3.5.18]{horn1994topics}}]\label{thm:uis}
			If $\|\cdot\|$ is a unitarily invariant norm on $\R^{m \times n}$, then there exists a symmetric gauge function $\phi$ on $\R^q$ such that $\|X\| = \phi(\sigma(X))$ for every $X \in \R^{m \times n}$. Conversely, if $\phi$ is a symmetric gauge function on $\R^q$, then $\|X\| = \phi(\sigma(X))$ is  a unitarily invariant norm on $\R^{m \times n}$.
	\end{theorem}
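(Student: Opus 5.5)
We prove the two directions of Theorem~\ref{thm:uis} separately, treating the converse first since the forward direction reduces to it.

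\emph{Converse.} Let $\phi$ be a symmetric gauge function and set $\|X\| := \phi(\sigma(X))$. Absolute homogeneity holds because $\sigma(tX) = |t|\sigma(X)$. Positive definiteness holds because $\sigma(X)=0$ iff $X=0$. Unitary invariance holds because $\sigma(UXV^T)=\sigma(X)$ for orthogonal $U,V$.

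The only real work is the triangle inequality, which is the main obstacle. We would use Ky Fan's inequality: $\sum_{i\le k}\sigma_i(X+Y)\le \sum_{i\le k}\sigma_i(X)+\sum_{i\le k}\sigma_i(Y)$ for every $k$. It follows from the variational formula $\sum_{i\le k}\sigma_i(Z)=\max\{\trace(U^TZV): U\in\R^{m\times k},\,V\in\R^{n\times k},\,U^TU=V^TV=I\}$. Hence $\sigma(X+Y)\prec_w \sigma(X)+\sigma(Y)$.

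We then need that a symmetric gauge function is monotone under weak majorization on nonnegative vectors. The argument has two parts:
\begin{itemize}
\item If $x\prec y$, then $x$ is a convex combination of permutations of $y$ (Birkhoff--Hardy--Littlewood--P\'olya), so $\phi(x)\le\phi(y)$ by convexity and symmetry.
\item For weak majorization with nonnegative entries, we first decrease entries of $y$ to reach some $y'\le y$ componentwise with $x\prec y'$. Monotonicity of $\phi$ in $|x_i|$ then gives $\phi(y')\le\phi(y)$. This monotonicity follows from convexity and the sign symmetry: $(x_1,\dots,t x_i,\dots)$ for $|t|\le1$ is a convex combination of the vector and its sign-flipped copy.
\end{itemize}
Combining these, $\|X+Y\|\le\phi(\sigma(X)+\sigma(Y))\le\phi(\sigma(X))+\phi(\sigma(Y))$.

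\emph{Forward direction.} Given a unitarily invariant norm, define $\phi(x):=\|\Diag(x)\|$ for $x\in\R^q$. This is a norm on $\R^q$ because $x\mapsto\Diag(x)$ is linear and injective. It is absolutely symmetric because signed permutation matrices are orthogonal, and $P\Diag(x)Q^T$ realizes any signed permutation of the diagonal.

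Finally, use the SVD $X=U\Diag(\sigma(X))V^T$ together with unitary invariance to get $\|X\|=\phi(\sigma(X))$.
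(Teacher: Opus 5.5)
The paper does not prove this statement; it quotes it from Horn--Johnson. Your argument is correct and follows the standard proof of von Neumann's theorem. For the forward direction you take $\phi(x)=\|\Diag(x)\|$. For the triangle inequality you combine Ky Fan's $\sigma(X+Y)\prec_w\sigma(X)+\sigma(Y)$ with the monotonicity of symmetric gauge functions under weak majorization on $\R^q_+$. You take the Ky Fan variational formula as known, which is reasonable.

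One detail in the weak-majorization step should be stated explicitly. The intermediate vector must satisfy $0\le y'\le y$, not merely $y'\le y$. Monotonicity of $\phi$ in $|x_i|$ yields $\phi(y')\le\phi(y)$ only when $|y'_i|\le y_i$.

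The naive choice fails. Subtracting the whole deficit $\delta=\sum_i y_i-\sum_i x_i$ from the smallest entry can create a negative entry. For $x=(1,1)$ and $y=(3,0)$ this gives $y'=(3,-1)$, so $\phi(y')=\phi(3,1)\ge\phi(y)$ and the chain breaks.

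The fix is to sort $y$ non-increasingly and remove $\delta$ greedily from the smallest entries, driving each to zero before touching the next. This is possible because $\delta\le\sum_i y_i$, which holds since $x\ge 0$. The result has the form $y'=(y_1,\dots,y_{j-1},y'_j,0,\dots,0)$ with $0\le y'_j\le y_j$, and it is still sorted. Its partial sums equal those of $y$ for $k<j$. For $k\ge j$ they equal $\sum_i x_i\ge\sum_{i\le k}x_{[i]}$. Hence $x\prec y'$, and this is exactly where the nonnegativity of $x$ is used.

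With that made explicit, the proof is complete.
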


	We next turn to majorization, which provides the ordering used to formulate the monotonicity assumption in Section \ref{sec:main_results}.
	\begin{definition}[Majorization]
		For any vectors $x, y \in \R^q$, let $x_{[1]} \ge \dots \ge x_{[q]}$ and $y_{[1]} \ge \dots \ge y_{[q]}$ denote their components arranged in non-increasing order. We say that $x$ is majorized by $y$, denoted as $x \prec y$, if
		\[
		\sum_{i=1}^k x_{[i]} \le \sum_{i=1}^k y_{[i]} \quad \forall k = 1, \dots, q-1, \,\, \text{and} \,\, \sum_{i=1}^q x_{[i]} = \sum_{i=1}^q y_{[i]}.
		\]
		Furthermore, if the equality condition at $k=q$ is replaced by inequality ($\le$), we say that $x$ is weakly majorized by $y$, denoted as $x \prec_w y$.
	\end{definition}

	The following standard inequalities connect diagonal entries, singular values, and perturbations. The proofs can be found  in \cite{fan1951maximum,mirsky1960symmetric},
	\cite[Proposition~B.6]{marshall2011inequalities}, and \cite[Theorem~3.4.5]{horn1994topics}.

	\begin{theorem}\label{thm:diag}
	For any $A,X \in \R^{m \times n}$, it holds that
	\begin{enumerate}[label=(\roman*),leftmargin=*]
		\item $|\diag(A)| \prec_w \sigma(A)$;

		\item $|\sigma(X)-\sigma(A)| \prec_w \sigma(X-A)$.
	\end{enumerate}

	\end{theorem}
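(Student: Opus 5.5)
Both parts rest on Ky Fan's variational characterization: for every $k=1,\dots,q$,
\[
\sum_{i=1}^k \sigma_i(A) = \max\{ \trace(U^T A V) : U\in\R^{m\times k},\ V\in\R^{n\times k},\ U^TU=V^TV=I_k\},
\]
and the Ky Fan $k$-norm $\|\cdot\|_{(k)}$ is therefore a norm, in particular subadditive. I would take this as the standard tool; it can itself be derived from the SVD together with the fact that the singular values of a submatrix of an orthogonal-times-$A$ product do not exceed those of $A$ (interlacing).

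For (i), fix $k$ and let $I\subseteq\{1,\dots,q\}$ be the indices of the $k$ largest values of $|A_{ii}|$. Set $s_i=\operatorname{sign}(A_{ii})$, taking $s_i=1$ when $A_{ii}=0$. Take $U$ to consist of the columns $s_ie_i$, $i\in I$, of the $m\times m$ identity, and $V$ to consist of the columns $e_i$, $i\in I$, of the $n\times n$ identity. Both have orthonormal columns, and
\[
\trace(U^TAV)=\sum_{i\in I}s_iA_{ii}=\sum_{i=1}^k |\diag(A)|_{[i]}.
\]
By Ky Fan this is at most $\sum_{i=1}^k\sigma_i(A)$. Since this holds for every $k$ up to and including $q$, we get $|\diag(A)|\prec_w\sigma(A)$.

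For (ii), I would pass through a symmetric dilation. Set
\[
\tilde X=\begin{pmatrix}0&X\\X^T&0\end{pmatrix},
\]
whose eigenvalues are $\pm\sigma_i(X)$ together with $m+n-2q$ zeros, and define $\tilde A$ and $\tilde X-\tilde A$ in the same way. The claim then reduces to Lidskii's theorem for symmetric matrices,
\[
\lambda(\tilde X)-\lambda(\tilde A)\prec\lambda(\tilde X-\tilde A).
\]
The hardest step is turning this into a statement about absolute values. Pair the ordered coordinates: the top $q$ eigenvalues of $\tilde X$ and $\tilde A$ are $\sigma(X)$ and $\sigma(A)$, and the bottom $q$ are their negatives in reverse order. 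The vector $\lambda(\tilde X)-\lambda(\tilde A)$ therefore contains both $\sigma_i(X)-\sigma_i(A)$ and its negative for every $i$, padded with zeros. Its $k$ largest entries thus sum to $\sum_{i=1}^k|\sigma(X)-\sigma(A)|_{[i]}$ for $k\le q$. The $k$ largest eigenvalues of $\tilde X-\tilde A$ sum to $\sum_{i=1}^k\sigma_i(X-A)$, and Lidskii's partial-sum inequalities give exactly $|\sigma(X)-\sigma(A)|\prec_w\sigma(X-A)$.

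If Lidskii is not taken as known, I would prove the needed inequality from Wielandt's minimax principle. Alternatively, I would avoid the full theorem and use the Ky Fan subadditivity on the dilation for index subsets. That is the genuinely nontrivial ingredient; everything else is bookkeeping with the dilation's spectrum.
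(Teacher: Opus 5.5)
Your proof is correct. It is the classical argument behind the references (Fan, Mirsky, Marshall--Olkin, Horn--Johnson) that the paper cites instead of giving a proof: Ky Fan's maximum principle with signed coordinate selections gives (i), and Mirsky's route through the Jordan--Wielandt dilation plus Lidskii's theorem gives (ii), and your bookkeeping of the $\pm(\sigma_i(X)-\sigma_i(A))$ entries is right. One caution on your fallback: Ky Fan subadditivity applied directly to the dilation only controls the leading partial sums $\sum_{i\le k}(\sigma_i(X)-\sigma_i(A))$, not sums over arbitrary index sets, so it cannot replace Lidskii (or Wielandt's minimax principle) here.
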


	The following notion of monotonicity under weak majorization will be used in the subsequent analysis.
	\begin{definition}[Monotonicity Under Weak Majorization]\label{def:weak_majorization_monotonicity}
	A function $f:\R^q\to(-\infty,\infty]$ is said to be monotone under weak majorization on $\R^q_+$ if, for any $x,y\in\R^q_+$,
	\[
	x\prec_w y \;\Rightarrow\; f(x)\le f(y).
	\]
	\end{definition}

	A standard result \cite[Theorem~A.8]{marshall2011inequalities} states that monotonicity under weak majorization on $\R^q_+$ is equivalent to being componentwise non-decreasing and Schur-convex on $\R^q_+$. Here, componentwise non-decreasing means that if $0\le x_i \le y_i$ for all $i=1,\dots,q$, then $f(x)\le f(y)$. This characterization shows that all unitarily invariant norms are monotone under weak majorization.
	We shall also emphasize that monotonicity under weak majorization does not require convexity. For example, the nonconvex log-type spectral function
	\[
	F(X) = \log(1 + \|X\|_*/\gamma),
	\qquad \gamma > 0,
	\]
	has the associated absolutely symmetric function
	$
	f(x) = \log(1 + \|x\|_1/\gamma),
	$
	which is monotone under weak majorization on $\R^q_+$.

	\section{Main Results}\label{sec:main_results}

		\subsection{Regularized Spectral Approximation}

	In this subsection, we derive the exact low-dimensional reformulation for the regularized spectral approximation problem. We recall its formulation here for convenience:
	\begin{equation} \label{prob:general}
		\min_{X \in \R^{m \times n}} F_1(X-A) + \mu F_2(X) \ \ \st \ \ G(X) \leq c.
	\end{equation}
	We make the following assumptions on \eqref{prob:general}.

	\begin{assumption} \label{ass:monotonicity}
		For problem \eqref{prob:general}, we assume  that
		\begin{enumerate}[label=(\roman*),leftmargin=*]
			\item $F_1, F_2, G$ are spectral functions on $\R^{m \times n}$, associated with absolutely symmetric functions $f_1, f_2, g : \R^q \to (-\infty,\infty]$, respectively;
			\item $f_1$ is monotone under weak majorization on $\R^q_+$ in the sense of Definition~\ref{def:weak_majorization_monotonicity}.
		\end{enumerate}
	\end{assumption}

		Assumption~\ref{ass:monotonicity} imposes a weak-majorization monotonicity requirement only on the data-fitting term $F_1$, but not on $F_2$ or $G$. This weaker requirement keeps the framework compatible with a broad range of regularizers and constraints, such as the rank function (nonconvex and discontinuous) and other nonconvex surrogates, e.g., the Schatten-$p$ quasi-norm with $0 < p < 1$.

	\begin{theorem} \label{thm:general_spectral}
		Suppose Assumption~\ref{ass:monotonicity} holds. Let $A = U \Sigma V^T$ be the singular value decomposition (SVD) of $A$, where $\Sigma = \Diag(\sigma(A))$.
		Then problem~\eqref{prob:general} admits an optimal solution if and only if the reduced problem
		\begin{equation} \label{prob:vector}
			\begin{aligned}
					\min_{x \in \R^q} \quad & f_1(x - \sigma(A)) + \mu f_2(x) \\
					\st \ \quad & g(x) \le c, \\
					& x_1 \ge \cdots \ge x_q \ge 0
			\end{aligned}
		\end{equation}
		admits an optimal solution.
		Furthermore, if $x^*$ is an optimal solution to~\eqref{prob:vector},
		then any matrix of the form
		\[
		X^* = U \Diag(x^*) V^T
		\]
		is an optimal solution to~\eqref{prob:general}. Conversely, if $X^*$ is an optimal solution to~\eqref{prob:general}, then $\sigma(X^*)$ is an optimal solution to~\eqref{prob:vector}.
	\end{theorem}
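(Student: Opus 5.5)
The plan is to compare the objective of \eqref{prob:general} at any feasible $X$ with the objective of \eqref{prob:vector} at $x=\sigma(X)$, and to show the former always dominates the latter, with equality on matrices of the form $U\Diag(x)V^T$. Write $\Phi(X)=F_1(X-A)+\mu F_2(X)$ and $\varphi(x)=f_1(x-\sigma(A))+\mu f_2(x)$.

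\textbf{Step 1 (lifting).} For any $x$ feasible for \eqref{prob:vector}, set $X=U\Diag(x)V^T$. Since $x$ is non-increasing and nonnegative, $\sigma(X)=x$. Hence $F_2(X)=f_2(x)$ and $G(X)=g(x)\le c$, so $X$ is feasible for \eqref{prob:general}. Moreover $X-A=U\Diag(x-\sigma(A))V^T$. Its singular values are the entries of $|x-\sigma(A)|$ after reordering. By absolute symmetry of $f_1$, this gives $F_1(X-A)=f_1(x-\sigma(A))$. Therefore $\Phi(X)=\varphi(x)$.

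\textbf{Step 2 (projection).} For any $X$ feasible for \eqref{prob:general}, put $x=\sigma(X)$. Then $x$ is ordered and nonnegative, and $g(x)=G(X)\le c$, so $x$ is feasible for \eqref{prob:vector}. We also have $\mu F_2(X)=\mu f_2(x)$.

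The fidelity comparison is the key step. By Theorem~\ref{thm:diag}(ii), $|\sigma(X)-\sigma(A)|\prec_w\sigma(X-A)$, and both vectors lie in $\R^q_+$. Assumption~\ref{ass:monotonicity}(ii) then gives $f_1(|x-\sigma(A)|)\le f_1(\sigma(X-A))=F_1(X-A)$. Absolute symmetry turns the left side into $f_1(x-\sigma(A))$, so $\varphi(\sigma(X))\le\Phi(X)$.

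Some care is needed with extended values. If $\mu=0$ the term $\mu f_2$ is simply absent, and the inequality is between values in $(-\infty,\infty]$, so it remains meaningful.

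\textbf{Step 3 (conclusion).} Steps 1 and 2 together show that the two problems have the same infimum. Suppose $x^*$ is optimal for \eqref{prob:vector}. Then $X^*=U\Diag(x^*)V^T$ attains $\varphi(x^*)$, which is a lower bound for $\Phi$ over the feasible set, so $X^*$ is optimal. This holds for any choice of SVD factors $U,V$, since Step 1 used only $A=U\Sigma V^T$.

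Conversely, suppose $X^*$ is optimal for \eqref{prob:general}. Then $\varphi(\sigma(X^*))\le\Phi(X^*)=\inf\Phi=\inf\varphi$, so $\sigma(X^*)$ is optimal for \eqref{prob:vector}. This also yields the existence equivalence.

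\textbf{Main obstacle.} The only nontrivial ingredient is the inequality in Step 2. It rests entirely on the Mirsky-type weak majorization in Theorem~\ref{thm:diag}(ii) together with the monotonicity assumption. Two points need checking when applying it:
\begin{itemize}
\item monotonicity is assumed only on $\R^q_+$, which is why we pass to absolute values before invoking it;
\item $f_1$ is used at the unsorted vector $x-\sigma(A)$, which is harmless because $f_1$ is absolutely symmetric.
\end{itemize}
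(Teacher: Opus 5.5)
Your proposal is correct and follows essentially the same route as the paper: you lift each feasible vector to $U\Diag(x)V^T$ with equal objective, and you bound the matrix objective from below by the vector objective at $\sigma(X)$ using Theorem~\ref{thm:diag}(ii), weak-majorization monotonicity, and absolute symmetry of $f_1$. Arguing through equal infima is a repackaging of the paper's pointwise inequality chains, and your remarks on passing to absolute values and on extended values make explicit what the paper leaves implicit.
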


	\begin{proof}
	We first establish a lower bound for the objective value in \eqref{prob:general}.
	Let $X \in \R^{m \times n}$ be an arbitrary feasible point of
	\eqref{prob:general}. Then $\sigma(X)$
	is feasible for~\eqref{prob:vector}.
	By Theorem~\ref{thm:diag}(ii),
	\[
	|\sigma(X) - \sigma(A)| \prec_w \sigma(X-A).
	\]
	Under Assumption~\ref{ass:monotonicity}, we obtain
	\begin{align*}
		& F_1(X-A) = f_1(\sigma(X-A)) \\
		\ge\ & f_1(|\sigma(X)-\sigma(A)|)
		= f_1(\sigma(X)-\sigma(A)).
	\end{align*}

	Consequently,  the objective value in \eqref{prob:general} is lower bounded by that in \eqref{prob:vector}
	\begin{align} \label{eq:fundamental_inequality}
		& F_1(X-A) + \mu F_2(X)\\
		\ge \ &
		f_1(\sigma(X)-\sigma(A))
		+ \mu f_2(\sigma(X)). \nonumber
	\end{align}

	Now, suppose that $x^*$ is an optimal solution to~\eqref{prob:vector}.
	Define $X^*= U \Diag(x^*) V^T$, which is obviously feasible for  \eqref{prob:general}.
	Note that $X^*-A = U\Diag(x^*-\sigma(A) )V^T$. Then we have
	\begin{align}\label{pf:in2}
		F_1(X^*-A) = f_1(\sigma(X^*-A)) = f_1(x^*-\sigma(A)).
	\end{align}
	Next we show that $X^*$ is optimal for  \eqref{prob:general}. Let $X \in \R^{m \times n}$ be an arbitrary feasible point of \eqref{prob:general}. Then $\sigma(X)$
	is feasible for~\eqref{prob:vector}. We have
	\begin{align*}
		&  F_1(X^*-A)  + \mu F_2(X^*) \\
		= \ &  f_1(x^*-\sigma(A))+ \mu f_2(x^*)  \\
		\le \ &  f_1(\sigma(X)-\sigma(A)) + \mu f_2(\sigma(X))  \\
		\le \ & F_1(X-A)  + \mu F_2(X), \quad
	\end{align*}
	where the first equality is by \eqref{pf:in2}, the second inequality is by the optimality of  $x^*$, and the last inequality is by \eqref{eq:fundamental_inequality}.
	Thus $X^*$ is optimal for~\eqref{prob:general}.

	Conversely, assume that $X^*$ is an optimal solution of \eqref{prob:general}. Define
	$x^*=\sigma(X^*)$, which is obviously feasible for \eqref{prob:vector}. Next we show that $x^*$ is optimal for \eqref{prob:vector}. Let $ x \in \R^q$ be an arbitrary
	feasible point of \eqref{prob:vector}. Then $X = U \Diag(x) V^T$ is feasible for \eqref{prob:general}. We have
	\begin{align*}
		& f_1(x^*-\sigma(A)) + \mu f_2(x^*) \\
		\le \ &    F_1(X^*-A)  + \mu F_2(X^*)  \\
		\le \ &   F_1(X-A)  + \mu F_2(X)  \\
		= \ &  f_1(x-\sigma(A)) + \mu f_2(x),
	\end{align*}
	where the first inequality is by \eqref{eq:fundamental_inequality}, the second inequality is by the optimality of  $X^*$, and the last equality is by the same argument as in \eqref{pf:in2}. Thus $x^*$  is optimal for~\eqref{prob:vector}.
	\end{proof}

As an illustration, consider the square-root low-rank matrix approximation problem
\begin{equation}\label{prob:low_rank_approx}
	\min \left\{ \|X-A\|_F + \mu F_2(X) \mid \rank(X)\leq k\right\},
\end{equation}
where $0<k\leq q=\min\{m,n\}$ is a given integer. Here, the regularizer $F_2$ can be the nuclear norm or a nonconvex Schatten-$p$ quasi-norm with $0<p<1$; see \cite{deng2025alternating,fan2019factor}.
Let $A=U\Diag(\sigma(A))V^T$ be the SVD of $A$, and let $f_2$ be the absolutely symmetric function associated with $F_2$. Then Theorem~\ref{thm:general_spectral} yields the following exact low-dimensional reformulation of \eqref{prob:low_rank_approx}:
\begin{equation*}
	\min \left\{ \|x-\sigma(A)\|_2 + \mu f_2(x) \mid \|x\|_0\le k,\ x_1\ge \cdots \ge x_q\ge 0 \right\}.
\end{equation*}
Here $\|\cdot\|_2$ denotes the Euclidean norm on vectors, and $\|\cdot\|_0$ denotes the number of nonzero entries.
For any feasible $x$, since $\|x\|_0\le k$ and $x_1\ge \cdots \ge x_q\ge 0$, we necessarily have $x_{k+1}=\cdots=x_q=0$.

Denote
\[
w:=(\sigma_1(A),\ldots,\sigma_k(A))^T,\mbox{  and  } a:=\sqrt{\sum_{i=k+1}^q \sigma_i^2(A)}.
\]
The reduced problem can be written as
\begin{equation*}
	\min_{s\in\mathcal{M}_k}  \sqrt{\|s-w\|_2^2+a^2} + \mu f_2\left(\begin{bmatrix} s \\ \mathbf{0}_{q-k} \end{bmatrix}\right),
\end{equation*}
where $\mathcal{M}_k:=\{s\in\R_+^k \mid s_1\ge \cdots \ge s_k\ge 0\}$.
When $F_2=\|\cdot\|_*$, the above model further reduces to
\begin{equation*}
	\min_{s\in\mathcal{M}_k} \sqrt{\|s-w\|_2^2+a^2} + \mu \|s\|_1.
\end{equation*}
If $s^*$ is an optimal solution of this problem, then
\[
X^*=U\Diag\left(\begin{bmatrix} s^* \\ \mathbf{0}_{q-k} \end{bmatrix}\right)V^T
\]
is an optimal solution to the original matrix problem \eqref{prob:low_rank_approx}. This recovers the results obtained in \cite[Theorem 4]{deng2025alternating}.

	\subsection{Weighted Regularized Spectral Approximation}

	We now turn our attention to the weighted regularized spectral approximation problem:
	\begin{equation} \label{prob2}
		\min_{X \in \R^{m \times n}} \ell(\|A - BXC\|_{\rm UI}) + \mu F(X) \quad \st \quad G(X)\leq c,
	\end{equation}
	where $A\in\R^{r\times s}$, $B\in\R^{r\times m}$, $C\in\R^{n\times s}$ are given matrices, and $\|\cdot\|_{\rm UI}$ denotes any unitarily invariant norm. In particular, when both $B$ and $C$ are identity matrices, \eqref{prob2} reduces to \eqref{prob:general} with $F_1(Z)=\ell(\|Z\|_{\rm UI})$. Denote $\bar q=\min\{r,s\}$. Let
	\begin{equation}\label{thinSVD-BC}
	B=U_B\Sigma_BV_B^T \quad \text{and} \quad C=U_C\Sigma_CV_C^T
	\end{equation}
	be thin SVDs of $B$ and $C$, respectively. Here, $r_b=\rank(B)$ and $r_c=\rank(C)$,
	\[
	\begin{aligned}
	&U_B\in\R^{r\times r_b},\quad V_B\in\R^{m\times r_b},\quad \Sigma_B\in\R^{r_b\times r_b},\\
	&U_C\in\R^{n\times r_c},\quad V_C\in\R^{s\times r_c},\quad \Sigma_C\in\R^{r_c\times r_c}.
	\end{aligned}
	\]
	Moreover, $U_B^\perp\in\R^{r\times(r-r_b)}$ and $V_C^\perp\in\R^{s\times(s-r_c)}$ denote orthogonal complements of $U_B$ and $V_C$, respectively.

	We make the following assumptions on  \eqref{prob2}.

	\begin{assumption} \label{ass:weighted_setup}
		For problem \eqref{prob2}, we assume that
		\begin{enumerate}[label=(\roman*),leftmargin=*]
			\item $F$ and $G$ are spectral functions on $\R^{m\times n}$, associated with absolutely symmetric functions $f,g:\R^q\to(-\infty,\infty]$, respectively;
			\item $\ell:\R_+\to(-\infty,\infty]$ is proper and non-decreasing;
			\item $f$ and $g$ are monotone under weak majorization on $\R^q_+$ in the sense of Definition~\ref{def:weak_majorization_monotonicity};
			\item  $A,B,C$ satisfy

\[
	(I-P_B)AP_C=0
	\quad \text{and} \quad
	P_BA(I-P_C)=0,
	\]
 where $P_B$ and $P_C$ are the orthogonal projectors
	onto the column space of $B$ and the row space of $C$,
	respectively;
			\item there exists a pair of thin SVDs of
$B$ and $C$ as in~\eqref{thinSVD-BC} such that $\widetilde A=U_B^TAV_C$ is a generalized diagonal matrix, i.e., $\widetilde A_{ij}=0$ for $i\ne j$.
		\end{enumerate}
	\end{assumption}

We make the following observations regarding
Assumption~\ref{ass:weighted_setup}. First, the thin SVDs of $B$
and $C$ need not be unique, particularly when they have repeated
positive singular values. Condition~(v) requires
that there exists a pair of thin SVDs satisfying the stated
diagonality condition. Whenever condition~(v) holds, we fix one
such pair throughout the subsequent analysis. All representations
below are understood with respect to these fixed SVDs.
Second, condition~(iv) is independent of the particular SVDs used.
Indeed, for any thin SVDs of $B$ and $C$, we have that
$ P_B=U_BU_B^T $ and $P_C=V_CV_C^T$. Since
$
I-P_B=U_B^\perp(U_B^\perp)^T$ and $
I-P_C=V_C^\perp(V_C^\perp)^T,
$
condition~(iv) is equivalent to
\begin{equation}\label{cond-orth}
(U_B^\perp)^TAV_C=0,
\qquad
U_B^TAV_C^\perp=0.
\end{equation}

	Assumption~\ref{ass:weighted_setup}(i) and (iii) cover a broad class of settings beyond purely norm-based formulations. Nevertheless, important cases such as rank regularization lie outside the present framework and are deferred to future work. The remaining conditions, (iv) and (v), impose structural requirements that are standard in the analysis of matrix approximation under weighted norms~\cite{yu2012rank}. The necessity of (iv) was established in \cite{yu2012rank}, and we will show via a concrete example that (v) is likewise essential in our setting.

	We require the following lemmas regarding unitarily invariant norms.
	The first result follows from the singular value inequalities in
	\cite[Problem II.5.5]{bhatia2013matrix},
	together with the Ky Fan dominance theorem \cite{fan1951maximum}.

	\begin{lemma} \label{lem:block_norm}
		For matrices of compatible dimensions and any unitarily invariant norm $\|\cdot\|_{\rm UI}$, it holds that
		\begin{equation*}
			\left\|\begin{pmatrix}
				A&B\\C&D
			\end{pmatrix}\right\|_{\rm UI}\geq \left\|\begin{pmatrix}
				A&0\\0&D
			\end{pmatrix}\right\|_{\rm UI}\geq\left\|\begin{pmatrix}
				A&0\\0&0
			\end{pmatrix}\right\|_{\rm UI}.
		\end{equation*}
	\end{lemma}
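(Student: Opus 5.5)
The plan is to reduce both inequalities, via the Ky Fan dominance theorem \cite{fan1951maximum}, to statements about singular values. Write $M=\begin{pmatrix}A&B\\C&D\end{pmatrix}$, $M_0=\begin{pmatrix}A&0\\0&D\end{pmatrix}$ and $M_1=\begin{pmatrix}A&0\\0&0\end{pmatrix}$. By Theorem~\ref{thm:uis} every unitarily invariant norm has the form $\phi(\sigma(\cdot))$ with $\phi$ a symmetric gauge function. Such $\phi$ are monotone under weak majorization on the nonnegative orthant. Hence it suffices to prove $\sigma(M_1)\prec_w\sigma(M_0)\prec_w\sigma(M)$, that is, to compare the Ky Fan $k$-norms $\sum_{i\le k}\sigma_i$ for every $k$.

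For the second inequality, $\sigma(M_1)$ is $\sigma(A)$ padded with zeros. The vector $\sigma(M_0)$ is the nonincreasing rearrangement of the union of $\sigma(A)$ and $\sigma(D)$, again padded with zeros. The $k$ largest entries of $\sigma(M_0)$ therefore dominate the $k$ largest entries of $\sigma(A)$, since we are taking the top $k$ of a larger multiset of nonnegative numbers. This gives $\|M_1\|_{(k)}\le\|M_0\|_{(k)}$ for all $k$.

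For the first inequality, which I expect to be the main step, I would use the pinching argument of \cite[Problem II.5.5]{bhatia2013matrix}. Let $J=\Diag(I,-I)$ with blocks sized compatibly with the rows and columns respectively; $J$ is orthogonal. Then
\[
M_0=\tfrac12\bigl(M+J_1MJ_2\bigr),
\]
where $J_1$ and $J_2$ are the appropriate signature matrices, because $J_1MJ_2=\begin{pmatrix}A&-B\\-C&D\end{pmatrix}$. The Ky Fan $k$-norm is a unitarily invariant norm, so the triangle inequality gives
\[
\|M_0\|_{(k)}\le\tfrac12\|M\|_{(k)}+\tfrac12\|J_1MJ_2\|_{(k)}=\|M\|_{(k)}.
\]
Alternatively, one can apply the triangle inequality directly to $\|\cdot\|_{\rm UI}$, which bypasses Ky Fan dominance for this step. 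The only care needed is that the blocks may be rectangular, so $J_1$ and $J_2$ must have different sizes. Both are still orthogonal, so unitary invariance applies.

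Combining the two chains of weak majorizations with the monotonicity of $\phi$ yields the lemma. The same direct triangle-inequality argument works for the second inequality as well: take $D\mapsto 0$ in the same way, using $M_1=\tfrac12(M_0+J_1M_0J_2)$ with the signature $\Diag(I,-I)$ applied on one side only. This gives a uniform proof that avoids majorization altogether.
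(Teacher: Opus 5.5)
Your proof is correct and takes essentially the paper's route. The paper only cites this result: weak majorization of singular values from the pinching-type inequality in Bhatia, extended to every unitarily invariant norm by Ky Fan dominance. That is exactly your main line, with $\sigma(M_1)\prec_w\sigma(M_0)\prec_w\sigma(M)$ verified on Ky Fan $k$-norms.

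Your closing observation is a worthwhile simplification. The identities $M_0=\tfrac12(M+J_1MJ_2)$ and $M_1=\tfrac12(M_0+J_1M_0)$ give both inequalities directly from the triangle inequality and orthogonal invariance of $\|\cdot\|_{\rm UI}$. So neither the gauge-function representation nor Ky Fan dominance is actually needed.
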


	As a generalization of Lemma~\ref{lem:block_norm}, we also have the following spectral weak majorization result.
		\begin{lemma}\label{lemma:maj}
			Let $A\in\R^{r_b\times r_c}$, and let $D\in\R^{(r-r_b)\times(s-r_c)}$. Define $A_d=\Diag(\diag(A))$ by setting all off-diagonal entries of $A$ to be zero. Then
			\[
			\sigma
			\begin{pmatrix}
				A_d  & 0 \\[2mm]
				0 & D
			\end{pmatrix}
			\prec_w
			\sigma
			\begin{pmatrix}
				A & 0 \\[2mm]
				0 & D
			\end{pmatrix}.
			\]
		\end{lemma}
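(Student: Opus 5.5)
Because the matrix is block diagonal, its singular values are the union (as a multiset, zero-padded) of the singular values of the two diagonal blocks. Write $M_d=\Diag(A_d,D)$ and $M=\Diag(A,D)$. Then $\sigma(M_d)$ is the sorted concatenation of $|\diag(A)|$ (the singular values of the generalized diagonal matrix $A_d$) and $\sigma(D)$, padded with zeros. Likewise $\sigma(M)$ is the sorted concatenation of $\sigma(A)$ and $\sigma(D)$, padded with zeros. The claim thus reduces to a statement about concatenated vectors. By Theorem~\ref{thm:diag}(i) we have $|\diag(A)|\prec_w \sigma(A)$. It therefore suffices to show that weak majorization is preserved under concatenation with a common nonnegative vector, here $\sigma(D)$.

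For this concatenation step I would use the Ky Fan characterization of the partial sums: for a nonnegative vector $z$, $\sum_{i=1}^k z_{[i]}=\max\{\sum_{i\in S} z_i : |S|=k\}$. Let $x=|\diag(A)|$, $y=\sigma(A)$ and $d=\sigma(D)$, all nonnegative, with $x\prec_w y$ (after zero-padding $x$ and $y$ to the same length $\min\{r_b,r_c\}$). Fix $k$ and choose an optimal index set for $(x,d)$. It splits into $j$ indices from $x$ and $k-j$ indices from $d$. Its value is at most $\sum_{i\le j}x_{[i]}+\sum_{i\le k-j}d_{[i]}$, which in turn is at most $\sum_{i\le j}y_{[i]}+\sum_{i\le k-j}d_{[i]}$. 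This last quantity is the value of an admissible $k$-subset of $(y,d)$, so it is bounded by the $k$-th partial sum of the sorted $(y,d)$. This gives all the partial-sum inequalities. The final inequality at $k$ equal to the full length is included as well, since weak majorization requires no equality.

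The only care needed is bookkeeping. First, the sizes must match: both block matrices have the same dimensions $r\times s$, so both singular value vectors have length $\bar q$. The zero padding is harmless because all entries are nonnegative and extra zeros do not change the top-$k$ sums. Second, I must justify that the singular values of a block diagonal matrix are the union of the blocks' singular values. This follows by assembling the SVDs of the two blocks into a block-diagonal SVD. Beyond these two points I expect no real obstacle; the main step is the subset-splitting argument above.
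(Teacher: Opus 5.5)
Your proposal is correct and follows essentially the same route as the paper, which also proves the top-$k$ (Ky Fan $k$-norm) inequality for each $k$. It splits the top $k$ singular values of the block-diagonal matrix into $k_1$ from the $A_d$ block and $k_2$ from the $D$ block, bounds the $A_d$ part by the $A$ part via Theorem~\ref{thm:diag}(i), and recombines the result into an admissible top-$k$ sum for $\Diag(A,D)$. Your subset-selection argument is the vector form of that Ky Fan norm argument, and you state explicitly the block-diagonal singular-value fact that the paper uses without comment.
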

		\begin{proof}
			We prove the lemma by showing that for $k=1,\dots, \bar q$,
			$$
			\left\|
			\begin{pmatrix}
				A_d  & 0 \\[2mm]
				0 & D
			\end{pmatrix}
			\right\|_{(k)} \le
			\left\|
			\begin{pmatrix}
				A  & 0 \\[2mm]
				0 & D
			\end{pmatrix}
			\right\|_{(k)},
			$$
			where $\|\cdot\|_{(k)}$ denotes the Ky Fan $k$-norm.
			Fix an arbitrary $k$. There exist $k_1$ and $k_2$ satisfying $k_1+k_2 \le k$ and
			$$
			\left\|
			\begin{pmatrix}
				A_d  & 0 \\[2mm]
				0 & D
			\end{pmatrix}
			\right\|_{(k)}
			=\| A_d\|_{(k_1)} + \|D \|_{(k_2)}.
			$$
With the convention \(\|\cdot\|_{(0)}=0\), if \(k_1=0\), the desired inequality is immediate. Hence, suppose \(k_1\ge1\).
			Since $\| \cdot \|_{(k_1)}$ is a unitarily invariant norm on $\R^{r_b\times r_c}$, we have the associated symmetric gauge function $\phi_{k_1}$ on $\R^{\min\{r_b,r_c\}}$ such that $\| \cdot \|_{(k_1)}= \phi_{k_1}(\sigma(\cdot))$.
			Additionally, by Theorem~\ref{thm:diag}(i), $|\diag(A_d)| = |\diag(A)| \prec_w \sigma(A)$. Then
			\begin{align*}
				& \| A_d\|_{(k_1)}
				=  \phi_{k_1}(\sigma(A_d))
				=  \phi_{k_1}(|\diag(A_d)|) \\
				\le \ &  \phi_{k_1}(\sigma(A))
				=  \| A\|_{(k_1)}.
			\end{align*}
			Thus, we have
			\begin{align*}
				\left\|
				\begin{pmatrix}
					A_d  & 0 \\[2mm]
					0 & D
				\end{pmatrix}
				\right\|_{(k)}
				\le \ & \| A\|_{(k_1)} + \|D \|_{(k_2)} \\
				\le \ &
				\left\|
				\begin{pmatrix}
					A  & 0 \\[2mm]
					0 & D
				\end{pmatrix}
				\right\|_{(k)}.
			\end{align*}
			The proof is completed.
	\end{proof}

	\begin{lemma}\label{lem:sv_contraction}
		Let $X\in\R^{m\times n}$, and let $P\in\R^{m\times m}$ and $Q\in\R^{n\times n}$ be orthogonal projectors. Then
		\[
		\sigma(PXQ)\prec_w \sigma(X).
		\]
	\end{lemma}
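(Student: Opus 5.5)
The plan is to reduce the claim to the Ky Fan dominance characterization of weak majorization. Concretely, we will show that for every $k=1,\dots,q$,
\[
\sum_{i=1}^k \sigma_i(PXQ)\le \sum_{i=1}^k \sigma_i(X),
\]
which is exactly $\sigma(PXQ)\prec_w\sigma(X)$, since singular values are already sorted in non-increasing order and nonnegative.

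The simplest route uses the multiplicative singular value inequality $\sigma_i(YZ)\le \|Y\|_2\,\sigma_i(Z)$. This follows from the Courant--Fischer (min-max) characterization of singular values, or equivalently from $\sigma_i(YZ)\le\sigma_1(Y)\sigma_i(Z)$ (Horn--Johnson, Theorem~3.3.16).

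An orthogonal projector has spectral norm at most $1$; it equals $1$ unless the projector is $0$, in which case everything is trivial. Applying the inequality twice gives
\[
\sigma_i(PXQ)\le \|P\|_2\,\sigma_i(XQ)\le \sigma_i(XQ)=\sigma_i(Q^TX^T)\le \|Q\|_2\,\sigma_i(X^T)\le \sigma_i(X)
\]
for every $i$. In the middle step we used that singular values are invariant under transposition and that $Q^T=Q$.

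Componentwise domination $\sigma(PXQ)\le\sigma(X)$ then immediately implies weak majorization, since partial sums of the ordered entries are compared termwise. If one prefers to stay closer to the paper's tools, there is an alternative. Write $\|PXQ\|_{(k)}\le\|P\|_2\|X\|_{(k)}\|Q\|_2$ using the submultiplicativity-type bound $\|YZW\|_{\rm UI}\le\|Y\|_2\|Z\|_{\rm UI}\|W\|_2$ for unitarily invariant norms, applied to the Ky Fan $k$-norms. Then conclude via Theorem~\ref{thm:uis} and the Ky Fan dominance theorem.

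The only step needing care is justifying $\sigma_i(YZ)\le\|Y\|_2\sigma_i(Z)$. I would argue it directly from the min-max formula
\[
\sigma_i(M)=\min_{\dim\mathcal{S}= n-i+1}\ \max_{v\in\mathcal{S},\,\|v\|=1}\|Mv\|,
\]
together with $\|YZv\|\le\|Y\|_2\|Zv\|$. Taking the same subspace $\mathcal{S}$ for both sides yields the bound. No further obstacles are expected.
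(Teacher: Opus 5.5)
Your proof is correct, but it takes a different route from the paper's.

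The paper uses the projector structure directly. It chooses orthogonal $U,V$ with $U^TPU=\begin{pmatrix}I_{r_0}&0\\0&0\end{pmatrix}$ and $V^TQV=\begin{pmatrix}I_{s_0}&0\\0&0\end{pmatrix}$. Then $PXQ$ is orthogonally equivalent to $\begin{pmatrix}\widetilde X_{11}&0\\0&0\end{pmatrix}$, where $\widetilde X=U^TXV$. It applies the block inequality of Lemma~\ref{lem:block_norm} to each Ky Fan $k$-norm and concludes by Ky Fan dominance.

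You use only $\|P\|_2,\|Q\|_2\le 1$ together with the multiplicative bound $\sigma_i(YZ)\le\|Y\|_2\,\sigma_i(Z)$. Your Courant--Fischer justification, using a common subspace $\mathcal S$ for both sides, is sound, and the transposition trick handles the right-hand factor correctly.

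Your route buys two things:
\begin{itemize}
\item a stronger conclusion: entrywise domination $\sigma_i(PXQ)\le\sigma_i(X)$ for every $i$, not just the partial-sum inequalities;
\item a wider scope: the argument works for arbitrary contractions $P,Q$, not only orthogonal projectors, and needs neither the block-norm lemma nor Ky Fan dominance.
\end{itemize}

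The paper's route reuses a lemma it needs anyway and stays in the same Ky Fan-norm framework as Lemma~\ref{lemma:maj}. It also makes visible that $PXQ$ is simply a compression (a submatrix) of $X$ in suitable orthonormal coordinates. Combined with singular-value interlacing for submatrices, that observation would also yield your entrywise bound in the projector case.
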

	\begin{proof}
		We prove the lemma by showing that for $k=1,\dots,q$,
		$$
		\|PXQ\|_{(k)} \leq \|X\|_{(k)},
		$$
		where $\|\cdot\|_{(k)}$ denotes the Ky Fan $k$-norm.
		Since $P$ and $Q$ are orthogonal projectors, there exist orthogonal matrices $U\in\R^{m\times m}$ and $V\in\R^{n\times n}$ such that
		\[
		U^TPU=\begin{pmatrix}I_{r_0}&0\\0&0\end{pmatrix},\qquad
		V^TQV=\begin{pmatrix}I_{s_0}&0\\0&0\end{pmatrix},
		\]
		where $r_0=\rank(P)$ and $s_0=\rank(Q)$. Define $\widetilde X=U^TXV$.
		Then
		\[
		U^T(PXQ)V=(U^TPU)\widetilde X(V^TQV)=\begin{pmatrix}\widetilde X_{11}&0\\0&0\end{pmatrix},
		\]
		where $\widetilde X_{11}$ is the leading $r_0\times s_0$ block of $\widetilde X$. Then by Lemma~\ref{lem:block_norm}, we have
		\begin{align*}
			& \|PXQ\|_{(k)} = \left\|\begin{pmatrix}\widetilde X_{11}&0\\0&0\end{pmatrix}\right\|_{(k)} \\
			\le \ & \| \widetilde X\|_{(k)}=\|  X\|_{(k)}.
		\end{align*}
		The proof is completed.
	\end{proof}

	\begin{proposition} \label{thm:prob2}
		Suppose Assumption~\ref{ass:weighted_setup} holds.
		If problem~\eqref{prob2} admits an optimal solution, then it admits an optimal solution of the form
		\[
		X^*= V_B \Sigma U_C^T,
		\]
		where $\Sigma$ is a generalized diagonal matrix.
	\end{proposition}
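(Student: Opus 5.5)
Starting from an optimal solution $X^\circ$ of \eqref{prob2}, we will construct $X^*=V_B\Sigma U_C^T$ with $\Sigma$ generalized diagonal that is feasible and whose objective value is no larger than that of $X^\circ$. Write $\widetilde X = V_B^T X^\circ U_C\in\R^{r_b\times r_c}$, the compressed core. In orthogonal coordinates $[U_B\ U_B^\perp]$ on the left and $[V_C\ V_C^\perp]$ on the right, we have $BX^\circ C = U_B\Sigma_B\widetilde X\Sigma_C V_C^T$. By \eqref{cond-orth},
\[
[U_B\ U_B^\perp]^T (A-BX^\circ C)[V_C\ V_C^\perp]=\begin{pmatrix}\widetilde A-\Sigma_B\widetilde X\Sigma_C & 0\\ 0 & D\end{pmatrix},\qquad D=(U_B^\perp)^TAV_C^\perp .
\]
The residual is therefore block diagonal, and its lower block $D$ does not depend on $X$.

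Define $\Sigma := \Diag(\diag(\widetilde X))$, taken as an $r_b\times r_c$ generalized diagonal matrix, and set $X^*=V_B\Sigma U_C^T$. We handle the fidelity term first. Since $\widetilde A$ is generalized diagonal by Assumption~\ref{ass:weighted_setup}(v), and $\Sigma_B,\Sigma_C$ are diagonal, we get
\[
\widetilde A-\Sigma_B\Sigma\Sigma_C=\Diag\bigl(\diag(\widetilde A-\Sigma_B\widetilde X\Sigma_C)\bigr),
\]
which is exactly the diagonal part $(\cdot)_d$ of the upper-left block. Lemma~\ref{lemma:maj} then gives $\sigma(A-BX^*C)\prec_w\sigma(A-BX^\circ C)$. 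Every symmetric gauge function is monotone under weak majorization, so $\|A-BX^*C\|_{\rm UI}\le\|A-BX^\circ C\|_{\rm UI}$. Because $\ell$ is non-decreasing, the loss does not increase.

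Next we handle the regularizer and the constraint. We have $\sigma(X^*)=\sigma(\Sigma)=|\diag(\widetilde X)|$, padded with zeros to length $q$. By Theorem~\ref{thm:diag}(i), $|\diag(\widetilde X)|\prec_w\sigma(\widetilde X)$. Moreover, $\sigma(\widetilde X)$ padded with zeros equals $\sigma(V_BV_B^TX^\circ U_CU_C^T)$, since $V_B,U_C$ have orthonormal columns. Lemma~\ref{lem:sv_contraction} applied with the projectors $P=V_BV_B^T$, $Q=U_CU_C^T$ shows this is $\prec_w\sigma(X^\circ)$. Weak majorization is transitive, so $\sigma(X^*)\prec_w\sigma(X^\circ)$. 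Since all vectors involved lie in $\R^q_+$, Assumption~\ref{ass:weighted_setup}(iii) gives $F(X^*)\le F(X^\circ)$ and $G(X^*)\le G(X^\circ)\le c$. Hence $X^*$ is feasible, and with $\mu\ge0$ the total objective is no larger, so $X^*$ is optimal.

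The main obstacle is the bookkeeping in the fidelity step. The block-diagonalization relies on (iv) to kill the off-diagonal blocks. We must also check that replacing $\widetilde X$ by its diagonal commutes with the diagonal scalings by $\Sigma_B,\Sigma_C$ and with the diagonal $\widetilde A$, so that Lemma~\ref{lemma:maj} applies with the upper-left block in the role of $A$. Dimension padding needs some care as well: vectors of length $\min\{r_b,r_c\}$ must be extended to $q$ or $\bar q$ with zeros, which preserves $\prec_w$ among nonnegative vectors. Finally, the case where $\ell$ takes the value $+\infty$ is harmless, since monotonicity only ever decreases the value.
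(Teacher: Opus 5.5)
Your proposal is correct and follows essentially the paper's argument. The paper first shows that $V_BV_B^T\bar X U_CU_C^T$ is optimal, using Lemma~\ref{lem:sv_contraction} with the fidelity term unchanged, and then replaces the core by its diagonal part, using Theorem~\ref{thm:diag}(i) and Lemma~\ref{lemma:maj}; you collapse these two steps into one by chaining the weak majorizations through transitivity, and you explicitly invoke Assumption~\ref{ass:weighted_setup}(v) to identify $\widetilde A-\Sigma_B\Sigma\Sigma_C$ as the diagonal part of the upper-left block, a step the paper uses only implicitly.
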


	\begin{proof}
		The proof is carried out in two steps. First, we show that if \eqref{prob2} admits an optimal solution $\bar{X}$, then $V_B V_B^T \bar{X} U_C U_C^T$ is also an optimal solution of \eqref{prob2}. Denote $\bar{Y} = V_B^T \bar{X} U_C \in \R^{r_b \times r_c}$. Second, we show that if $V_B \bar{Y} U_C^T$ is an optimal solution of \eqref{prob2}, then $V_B \bar{Y}_d U_C^T$ is also an optimal solution of \eqref{prob2}, where $\bar{Y}_d=\Diag(\diag(\bar{Y}))$ is obtained by setting all off-diagonal entries of $\bar{Y}$ to be zero.

		For notational simplicity, we assume $\ell(x)=x$ in the proof. The same proof applies directly to general non-decreasing $\ell$.
		Denote the orthogonal matrices $U=[\,U_B\  U_B^\perp\,]$ and $V=[\,V_C\  V_C^\perp\,]$. Denote $\widetilde A = U_B^T A V_C$. Then under Assumption~\ref{ass:weighted_setup}(iv), for any $X\in\R^{m\times n}$ we have
		\begin{align}\label{eq:block_form}
			&U^T(A-BXC)V\\
			=\ &
			\begin{pmatrix}
				U_B^TAV_C-\Sigma_BV_B^TXU_C\Sigma_C & U_B^TAV_C^\perp\\[2mm]
				(U_B^\perp)^TAV_C & (U_B^\perp)^TAV_C^\perp
			\end{pmatrix} \nonumber \\
			=\ & \begin{pmatrix}
				\widetilde A-\Sigma_BV_B^TXU_C\Sigma_C &  \\[2mm]
				& (U_B^\perp)^TAV_C^\perp
			\end{pmatrix}.\nonumber
		\end{align}

		Assume that $\bar{X}$ is an optimal solution of \eqref{prob2}. Now we prove the first step (showing that $V_B V_B^T \bar{X} U_C U_C^T$ is also optimal for \eqref{prob2}). By Lemma~\ref{lem:sv_contraction} and Assumption~\ref{ass:weighted_setup}, $F(V_B V_B^T \bar{X} U_C U_C^T) \leq F(\bar{X})$ and $G(V_B V_B^T \bar{X} U_C U_C^T) \leq G(\bar{X})$, implying that  $V_B V_B^T \bar{X} U_C U_C^T$ is feasible for \eqref{prob2}.
		By substituting $V_B V_B^T \bar{X} U_C U_C^T$ into $X$ of \eqref{eq:block_form}, we compute the objective value of \eqref{prob2} at $V_B V_B^T \bar{X} U_C U_C^T$
		\begin{align*}
			& \hspace*{-5mm}\left\|
			\left(
\begin{array}{@{}c@{\hspace{1pt}}c@{}}
				\widetilde A -\Sigma_BV_B^T (V_B V_B^T \bar{X} U_C U_C^T) U_C\Sigma_C &  \\[2mm]
				& (U_B^\perp)^TAV_C^\perp
			\end{array}
\right)
			\right\|_{\rm UI} \\
			&+  \mu F(V_B V_B^T \bar{X} U_C U_C^T) \\[2mm]
			&\le \left\|
			\begin{pmatrix}
				\widetilde A-\Sigma_BV_B^T \bar{X} U_C\Sigma_C &  \\[2mm]
				& (U_B^\perp)^TAV_C^\perp
			\end{pmatrix}
			\right\|_{\rm UI}   \\
			&+  \mu F(\bar{X}),
		\end{align*}
		where the last term  is exactly the objective value of \eqref{prob2} at the optimal solution $\bar{X}$. Thus $V_B V_B^T \bar{X} U_C U_C^T$ is optimal for \eqref{prob2}.

		In the second step, we show that $V_B \bar{Y}_d U_C^T$ is optimal for \eqref{prob2}, where $\bar{Y}_d=\Diag(\diag(\bar{Y}))$, $\bar{Y} = V_B^T \bar{X} U_C$, and $\bar{Y}_d,\bar{Y} \in \R^{r_b \times r_c}$.
		Since
		\begin{align*}
			[\,V_B\  V_B^\perp\,]^T V_B \bar{Y}_d U_C^T [\,U_C\  U_C^\perp\,]& =
			\begin{pmatrix}
				\bar{Y}_d& \\&0
			\end{pmatrix}, \\
			[\,V_B\  V_B^\perp\,]^T V_B \bar{Y} U_C^T [\,U_C\  U_C^\perp\,]& =
			\begin{pmatrix}
				\bar{Y}& \\&0
			\end{pmatrix},
		\end{align*}
		we have $\sigma(V_B \bar{Y}_d U_C^T) \prec_w \sigma(V_B \bar{Y} U_C^T)$ by Theorem~\ref{thm:diag}(i). It implies that $F(V_B \bar{Y}_d U_C^T) \le F(V_B \bar{Y} U_C^T)$ and $G(V_B \bar{Y}_d U_C^T) \le G(V_B \bar{Y} U_C^T)$, and thus $V_B \bar{Y}_d U_C^T$ is feasible for \eqref{prob2}.
		By substituting $V_B \bar{Y}_d U_C^T$ into $X$ of \eqref{eq:block_form} and using Lemma~\ref{lemma:maj}, we compute the objective value of \eqref{prob2} at $V_B \bar{Y}_d U_C^T$
		\begin{align*}
			& \left\|
			\begin{pmatrix}
				\widetilde A -\Sigma_B \bar{Y}_d \Sigma_C &  \\[2mm]
				& (U_B^\perp)^TAV_C^\perp
			\end{pmatrix}
			\right\|_{\rm UI} \nonumber \\
			&+  \mu F(V_B \bar{Y}_d U_C^T) \nonumber\\
			&\le \left\|
			\begin{pmatrix}
				\widetilde A-\Sigma_B \bar{Y} \Sigma_C &  \\[2mm]
				& (U_B^\perp)^TAV_C^\perp
			\end{pmatrix}
			\right\|_{\rm UI}   \nonumber \\
			&+  \mu F(V_B \bar{Y} U_C^T),
		\end{align*}
		where the last term  is exactly the objective value of \eqref{prob2} at the optimal solution $V_B \bar{Y} U_C^T$. Thus $V_B \bar{Y}_d U_C^T$ is optimal for \eqref{prob2}.
		The proof is completed.
	\end{proof}

	\begin{theorem} \label{thm:prob2_vector}
		Suppose Assumption~\ref{ass:weighted_setup} holds.
		Then problem~\eqref{prob2} admits an optimal solution if and only if the vector problem
		\begin{equation} \label{prob2:vector}
			\begin{aligned}
					\min_{x \in \R^p} \quad & \ell\big(\phi(\omega(x))\big) + \mu f\left(\begin{bmatrix}
						x \\
						\mathbf{0}_{q-p}
					\end{bmatrix}\right) \\
					\st \ \quad & g\left(\begin{bmatrix}
						x \\
						\mathbf{0}_{q-p}
					\end{bmatrix}\right) \le c
			\end{aligned}
		\end{equation}
		admits an optimal solution. Here
		\[
		\omega(x) =
		\sigma\!\left(
		\begin{bmatrix}
			\widetilde A -\Sigma_B \Diag(x) \Sigma_C &  \\[2mm]
			& (U_B^\perp)^TAV_C^\perp
		\end{bmatrix}
		\right),
		\]
		$p=\min(r_b,r_c)$, $q=\min(m,n)$, $\bar q=\min(r,s)$, and $\phi:\R^{\bar q}\to\R_+$ is the symmetric gauge function associated with the unitarily invariant norm $\|\cdot\|_{\rm UI}$.
		Furthermore, if $x^*$ is an optimal solution to~\eqref{prob2:vector}, then
		\[
		X^* = V_B \Diag(x^*) U_C^T
		\]
		is an optimal solution to~\eqref{prob2}.
		Conversely, if $X^*$ is an optimal solution to~\eqref{prob2}, then
		\[
		x^* = \diag(V_B^T X^* U_C)
		\]
		is an optimal solution to~\eqref{prob2:vector}.
	\end{theorem}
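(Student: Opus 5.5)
The plan is to derive the theorem from Proposition~\ref{thm:prob2} together with the block identity \eqref{eq:block_form}, by comparing objective values along the linear map $x\mapsto V_B\Diag(x)U_C^T$.

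\textbf{Step 1: the objective on the diagonal family.} Write $\Psi(X)$ for the objective of \eqref{prob2} and $\psi(x)$ for that of \eqref{prob2:vector}. For $x\in\R^p$ set $X(x)=V_B\Diag(x)U_C^T$. Then $V_B^TX(x)U_C=\Diag(x)$, and \eqref{eq:block_form} gives
\[
\|A-BX(x)C\|_{\rm UI}=\phi(\omega(x)).
\]
This uses Theorem~\ref{thm:uis} and the orthogonal invariance under $U$ and $V$. Next I will complete $V_B$ and $U_C$ to orthogonal matrices. This shows that $X(x)$ has the same singular values as the padded generalized diagonal matrix with diagonal $x$. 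Hence $\sigma(X(x))$ is a permutation of $|[x;\mathbf 0_{q-p}]|$, and absolute symmetry yields
\[
F(X(x))=f([x;\mathbf 0_{q-p}]),\qquad G(X(x))=g([x;\mathbf 0_{q-p}]).
\]
So $\Psi(X(x))=\psi(x)$, and $x$ is feasible for \eqref{prob2:vector} if and only if $X(x)$ is feasible for \eqref{prob2}. In particular $\inf\Psi\le\inf\psi$.

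\textbf{Step 2: the reverse inequality for every feasible $X$.} Take any feasible $X$ and let $y=\diag(V_B^TXU_C)$. I will rerun the two steps of the proof of Proposition~\ref{thm:prob2} without assuming optimality. In that proof, the replacement $X\mapsto V_BV_B^TXU_CU_C^T\mapsto V_B\Diag(y)U_C^T$ was shown not to increase the fidelity term, the regularizer $F$, or the constraint function $G$. This relies on Lemmas~\ref{lem:sv_contraction} and~\ref{lemma:maj}, Theorem~\ref{thm:diag}(i), Assumption~\ref{ass:weighted_setup}(iii), and the monotonicity of $\ell$. None of those comparisons used optimality. Therefore $y$ is feasible for \eqref{prob2:vector} and
\[
\psi(y)=\Psi(X(y))\le\Psi(X)\qquad\text{for every feasible }X.
\]

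\textbf{Step 3: conclusion.} Steps 1 and 2 give $\inf\Psi=\inf\psi$, with the two transfers in Step 1 and Step 2 mapping feasible points to feasible points without raising the objective. If $x^*$ solves \eqref{prob2:vector}, then for any feasible $X$ we get $\Psi(X(x^*))=\psi(x^*)\le\psi(y)\le\Psi(X)$, so $X(x^*)$ is optimal. Conversely, if $X^*$ is optimal and $x^*=\diag(V_B^TX^*U_C)$, then for any feasible $x$ we get $\psi(x^*)\le\Psi(X^*)\le\Psi(X(x))=\psi(x)$, so $x^*$ is optimal. The existence equivalence follows from these two implications.

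\textbf{Main obstacle.} The delicate point is Step 2. Proposition~\ref{thm:prob2} is stated only for optimal solutions, so I must check that its monotonicity chain genuinely holds for an arbitrary feasible point. I also need the identification of $\sigma(V_B\Diag(y)U_C^T)$ with the padded $|y|$, handling possible negative entries of $y$ through absolute symmetry. A further check is the case where $\ell$ takes the value $+\infty$ or the objective is infinite; there the inequalities still hold in the extended reals, since $\ell$ is non-decreasing.
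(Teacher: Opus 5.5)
Your proposal is correct and follows the paper's proof essentially step for step. The paper also transfers feasible points via $x\mapsto V_B\Diag(x)U_C^T$ using \eqref{eq:block_form} and absolute symmetry, and in the other direction applies the two-step replacement from the proof of Proposition~\ref{thm:prob2} to an arbitrary feasible $X$; this is legitimate precisely because, as you observe, those comparisons never use optimality. When writing out Step 2, also list Assumption~\ref{ass:weighted_setup}(v) among the ingredients, since it makes $\widetilde A-\Sigma_B\Diag(y)\Sigma_C$ the diagonal part of $\widetilde A-\Sigma_B V_B^TXU_C\Sigma_C$, which is what lets Lemma~\ref{lemma:maj} apply.
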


	\begin{proof}
		Let $x \in \R^p$ and define $X = V_B \Diag(x) U_C^T$.
		Then the singular values of $X$ are the absolute values of the entries of $x$, padded with zeros to length $q$.
		Hence
		\[
		F(X) = f\left(\begin{bmatrix}
			x \\
			\mathbf{0}_{q-p}
		\end{bmatrix}\right),
		\qquad
		G(X) = g\left(\begin{bmatrix}
			x \\
			\mathbf{0}_{q-p}
		\end{bmatrix}\right).
		\]
		Moreover, by \eqref{eq:block_form}, the first term in the objective of~\eqref{prob2} at $X$ is exactly the first term in the objective of~\eqref{prob2:vector} at $x$.
		Therefore, every feasible point of~\eqref{prob2:vector} yields a feasible point of~\eqref{prob2} with the same objective value.

		Conversely, let $X \in \R^{m \times n}$ be feasible for~\eqref{prob2}.
		By the proof of Proposition~\ref{thm:prob2}, the matrix
		\[
		\widehat X = V_B \Diag(\diag(V_B^T X U_C)) U_C^T
		\]
		is feasible for~\eqref{prob2} and has objective value no larger than that of $X$.
		Let $\widehat x = \diag(V_B^T X U_C)$.
		Then $\widehat x$ is feasible for~\eqref{prob2:vector}, and its objective value is exactly the objective value of~\eqref{prob2} at $\widehat X$.
		Hence the objective value of~\eqref{prob2:vector} at $\widehat x$ is no larger than that of~\eqref{prob2} at $X$.
		This proves the equivalence between~\eqref{prob2} and~\eqref{prob2:vector}.

		Assume that $x^*$ is an optimal solution of~\eqref{prob2:vector}.
		Define $X^* = V_B \Diag(x^*) U_C^T$.
		Let $X \in \R^{m \times n}$ be feasible for~\eqref{prob2}.
		Then, by the preceding argument, there exists a feasible point $\widehat x$ of~\eqref{prob2:vector} such that the objective value of~\eqref{prob2:vector} at $\widehat x$ is no larger than that of~\eqref{prob2} at $X$.
		We have
		\begin{align*}
			& \ell(\phi(\omega(x^*))) + \mu f\left(\begin{bmatrix}
				x^* \\
				\mathbf{0}_{q-p}
			\end{bmatrix}\right) \\
			\le \ & \ell(\phi(\omega(\widehat x))) + \mu f\left(\begin{bmatrix}
				\widehat x \\
				\mathbf{0}_{q-p}
			\end{bmatrix}\right) \\
			\le \ & \ell(\|A-BXC\|_{\rm UI}) + \mu F(X).
		\end{align*}
		Thus $X^*$ is an optimal solution to~\eqref{prob2}.

		Conversely, assume that $X^*$ is an optimal solution of~\eqref{prob2}.
		Let $x^* = \diag(V_B^T X^* U_C)$.
		By the proof of Proposition~\ref{thm:prob2}, the matrix $V_B \Diag(x^*) U_C^T$ is also optimal for~\eqref{prob2}.
		Let $x \in \R^p$ be feasible for~\eqref{prob2:vector}.
		Then $X = V_B \Diag(x) U_C^T$ is feasible for~\eqref{prob2}, and
		\begin{align*}
			& \ell(\phi(\omega(x^*))) + \mu f\left(\begin{bmatrix}
				x^* \\
				\mathbf{0}_{q-p}
			\end{bmatrix}\right) \\
			\le \ & \ell(\|A-BXC\|_{\rm UI}) + \mu F(X) \\
			= \ & \ell(\phi(\omega(x))) + \mu f\left(\begin{bmatrix}
				x \\
				\mathbf{0}_{q-p}
			\end{bmatrix}\right).
		\end{align*}
		Thus $x^*$ is an optimal solution to~\eqref{prob2:vector}.
	\end{proof}

	Our results in Theorem~\ref{thm:prob2_vector} can be applied to solve several concrete problems. A classical instance is the generalized regularized approximation problem in \cite{yu2012rank}
	\[
	\min_{X}\ \|A-BXC\|_{\rm UI}+\mu\|X\|_{{\rm UI}'},
	\]
	where $\|\cdot\|_{\rm UI}$ and $\|\cdot\|_{{\rm UI}'}$ are two unitarily invariant norms. Theorem~\ref{thm:prob2_vector} applies whenever
Assumption~\ref{ass:weighted_setup} holds.
	Another widely used instance is self-representation in subspace clustering \cite{liu2013lrr}
	\[
	\min_{X}\ \|X\|_{*}\quad \st\quad D=DX, \quad X\in\R^{N\times N},
	\]
	where $D\in\R^{d\times N}$ is the data matrix. This can be cast into \eqref{prob2} by taking $A=B=D$, $C=I$, $\mu=1$, omitting the explicit constraint $G$, and choosing $\ell(t)=0$ if $t=0$ and $\ell(t)=+\infty$ otherwise. Then $\ell(\|D-DX\|_{\rm UI})$ enforces the equality constraint $D=DX$. To verify the assumptions, let $D=U_D\Sigma_DV_D^T$ be a thin SVD and set
$Q=[V_D\  V_D^\perp]$. Using the SVD
$C=QIQ^T$, condition \eqref{cond-orth} holds because
$(U_D^\perp)^TDQ=0$ and $C=I$ is full rank, while Assumption~\ref{ass:weighted_setup}(v)
follows from $ U_D^TDQ=[\Sigma_D\ 0]$.
Hence Assumption~\ref{ass:weighted_setup} holds, and Theorem~\ref{thm:prob2_vector} applies.

The simultaneous diagonalization (SD) assumption, i.e., Assumption~\ref{ass:weighted_setup}(v), is essential for the diagonal reduction in Proposition~\ref{thm:prob2}.

	We show below that the conclusion of Proposition~\ref{thm:prob2} may fail when the SD assumption is violated, even if Assumption~\ref{ass:weighted_setup}(i)--(iv) hold.
	Consider
		\[
		\min_{X\in\R^{3\times3}} \|A-BXC\|_F^2+\mu\|X\|_F^2,
		\]
		where
		\begin{align*}
			A=\begin{pmatrix}
				0&1& \\
				 &0& \\
				 & &0
			\end{pmatrix},
			\qquad
			B=C=\begin{pmatrix}
				2&&\\
				&1&\\
				&&1
			\end{pmatrix}.
	\end{align*}
	We can see that for this problem, Assumption~\ref{ass:weighted_setup}(i)--(iii) hold. Moreover, since $B$ and $C$ are nonsingular, $P_B=P_C=I$, and hence Assumption~\ref{ass:weighted_setup}(iv) holds.

Next, we show that the SD assumption fails for every pair of thin SVDs of $B$ and $C$. Since the singular value $2$ is simple and the singular value $1$ has multiplicity two, any such SVDs, with the singular values arranged in nonincreasing order, satisfy
	\[
	U_B=V_B=\begin{pmatrix}\varepsilon_B&0\\0&R_B\end{pmatrix},
	\quad
	U_C=V_C=\begin{pmatrix}\varepsilon_C&0\\0&R_C\end{pmatrix},
	\]
	where $\varepsilon_B,\varepsilon_C\in\{-1,1\}$ and $R_B,R_C\in\R^{2\times2}$ are orthogonal matrices. A direct calculation gives
\[
	\widetilde A=U_B^TAV_C=
	\begin{pmatrix}
		0 & \varepsilon_B (R_C)_{11}& \varepsilon_B (R_C)_{12} \\
0 & 0 & 0 \\
0 & 0 & 0
	\end{pmatrix},
	\]
 which cannot be a generalized diagonal matrix since $R_C$ is orthogonal. Thus, no pair of thin SVDs satisfies the SD assumption.
	Let
	\[
	L(X)=\|A-BXC\|_F^2+\mu\|X\|_F^2.
	\]
	The first-order optimality condition is
	\[
	\nabla_XL(X)=-2B^T(A-BXC)C^T+2\mu X=0,
	\]
	or equivalently,
	\[
	B^TBXC C^T+\mu X=B^TAC^T.
	\]
	Since $B$ and $C$ are diagonal, this system decouples elementwise as
	\[
	B_{ii}^2C_{jj}^2X_{ij}+\mu X_{ij}=B_{ii}A_{ij}C_{jj}, \qquad i,j=1,2,3.
	\]
	Since $B$ and $C$ are nonsingular and $\mu\ge0$, $L$ is strictly
convex. Hence its unique minimizer is given by
	\[
	X^*_{ij}
	=\frac{B_{ii}A_{ij}C_{jj}}{B_{ii}^2C_{jj}^2+\mu }
	,\quad X^* =\frac{1}{4+\mu}
	\begin{pmatrix}
		0&2&0\\
		0&0&0\\
		0&0&0
	\end{pmatrix}.
	\]
For any diagonal matrix
$\Sigma=\Diag(\Sigma_{11},\Sigma_{22},\Sigma_{33})$, we have
\[
V_B\Sigma U_C^T
=
\begin{pmatrix}
\varepsilon_B\varepsilon_C\Sigma_{11}&0\\
0&R_B\Diag(\Sigma_{22},\Sigma_{33})R_C^T
\end{pmatrix}.
\]
Hence the optimal solution $X^*$ cannot be written in the form $V_B\Sigma U_C^T$ with $\Sigma$ diagonal. This shows that the conclusion of Proposition~\ref{thm:prob2} may fail without the SD assumption.

	\section{Conclusion}
In this paper, we investigated exact low-dimensional reformulations for regularized spectral approximation problems.
The unweighted model reduces to a singular-value formulation under weak-majorization monotonicity, while the weighted case admits such a reduction under compatibility and simultaneous diagonalization.

Our results characterize when spectral matrix approximation problems can be treated via lower-dimensional representations, and demonstrate that these structural conditions are essential, as the reduction may fail without them.
Future directions include relaxing these assumptions, extending the framework to more general settings, and designing efficient algorithms for the reduced formulations.

\end{document}